\def\@begintheorem#1#2{\par\bgroup{\sc #1 \ #2. }  \it \\\ignorespace }
\def\@opargbegintheorem#1#2#3{\par\bgroup{\sc #1\ #2 \ (#3).}  \it  \ignorespace}
\def\@endtheorem{\egroup}
\theoremstyle{plain}
\newtheorem{theorem}{Theorem}[section]
\theoremstyle{definition}
\theoremstyle{remark}
\newtheorem{remark}[theorem]{Remark}
\theoremstyle{plain}
\theoremstyle{plain}
\newtheorem{lemma}[theorem]{Lemma}
\theoremstyle{plain}
\newtheorem{corollary}[theorem]{Corollary}
\theoremstyle{plain}
\numberwithin{equation}{section}
\newcommand{\C}{{\mathbb C}}
\newcommand{\D}{\mathcal{D}}
\newcommand{\W}{{\mathbf{W}}}
\newcommand{\R}{{\mathbb R}}
\newcommand{\I}{\mathbb{I}}
\newcommand{\vv}{\mathbf{v}}
\renewcommand{\u}{\mathbf{u}}
\newcommand{\V}{\mathbf{V}}
\renewcommand{\S}{{\mathbf{S}}}
\newcommand{\Nu}{{\rm{N}}}
\newcommand{\Rt}{{\R}^3}
\newcommand{\SL}{\S\cdot L}
\newcommand{\hx}{{\hat{x}}}
\renewcommand{\H}{\mathcal{H}}
\DeclareMathOperator{\Realpart}{Re}
\renewcommand{\Re}{\Realpart}
\lbrace\begin{array}{@{}l@{}}}%
\newcommand{\sgn}{\mathop{\textrm{sign}}}
\DeclarePairedDelimiter{\abs}{\lvert}{\rvert}
\DeclarePairedDelimiter{\seq}{\lbrace}{\rbrace}
\newcommand{\sign}{\mathop{\textrm{sign}}}
\newcommand{\ignora}[1]{}
\newcommand{\verde}[1]{}
\title[A Hardy-type inequality and some spectral characterizations for 
 Dirac-Coulomb]
{A Hardy-type inequality and some spectral characterizations for the
 Dirac-Coulomb operator}
\date{\today}
\author[B.~Cassano, F.~Pizzichillo, L.~Vega]{Biagio Cassano, Fabio
  Pizzichillo and Luis Vega}
\subjclass[2010]{Primary 81Q10; Secondary 47N20,  35P05, 47B25.}
\keywords{Dirac operator, Coulomb potential, Hardy inequality, self-adjoint operator, spectral properties, ground state}
\address{B.~Cassano, Department of Theoretical Physics, NPI, Academy of Sciences, 25068 \v{R}e\v{z} (Czechia)}
\email{cassano@ujf.cas.cz}
\address{F.~Pizzichillo, BCAM - Basque Center for Applied Mathematics,
Alameda de Mazarredo 14, 48009 Bilbao (Spain)}
\email{fpizzichillo@bcamath.org}
\address{L.~Vega, BCAM - Basque Center for Applied Mathematics,
Alameda de Mazarredo 14, 48009 Bilbao (Spain)}
\email{lvega@bcamath.org}
\begin{document}
\begin{abstract}
We prove a sharp Hardy-type inequality for the Dirac operator.
We exploit this inequality to obtain spectral properties of the Dirac
operator
perturbed with Hermitian matrix-valued potentials $\V$ of Coulomb type: 
we characterise its eigenvalues in terms of the Birman--Schwinger
principle and we bound its discrete spectrum from below, showing that
the \emph{ground-state energy} is reached 
if and only if $\V$ verifies some {rigidity} conditions.
In the particular case of an electrostatic potential, these imply
that $\V$ is the Coulomb potential.
\end{abstract}
\maketitle

\section{Introduction and main results}
Firstly formulated in \cite{hardy1920note},
the Hardy inequality can be stated as follows: for $d\geq 3$,
the following holds
\begin{equation}\label{eq:classical}
  \frac{(d-2)^2}{4}\int_{\R^d}\frac{|f|^2}{|x|^2}\,dx
  \leq
    \int_{\R^d}
  |\nabla
  f|^2\,dx,
  \quad\text{for}\ f\in C^\infty_c(\R^d).
\end{equation}
This inequality is sharp, in the sense that the constant in the left hand
side can not be increased, and 
there exists a sequence of approximate attainers.
We refer to \cite{kufner2006prehistory} for a historical review on
the topic.
The Hardy inequality is an uncertainty principle:
it states that a function cannot be concentrated around one point (the
origin) unless its momentum is big, and vice-versa if its momentum is
small then the function has to be spread in the space.
More in general, the Hardy inequality answers to the fundamental need in the mathematics of comparing
$L^2$--weighted norms of a function with the
norm of its derivative.
In this paper, we are interested in Hardy-type inequalities
for the Dirac operator: we  exploit them to show spectral
properties of the Dirac operator perturbed with potentials of Coulomb type.

The free Dirac operator in $\Rt$ is defined by 
\[
H_0:=-i\alpha\cdot\nabla+m\beta =-i \sum_{j=1}^3 \alpha_j \partial_j + m\beta, 
\]
where $m > 0$,
\begin{equation*}
  \beta:=
  \begin{pmatrix}
\mathbb{I}_2&0\\
0&-\mathbb{I}_2
\end{pmatrix},
\quad
\alpha:=(\alpha_1,\alpha_2,\alpha_3),
\quad
\alpha_j:=\begin{pmatrix}
0& {\sigma}_j\\
{\sigma}_j&0
\end{pmatrix}\quad \text{for}\ j=1,2,3,
\end{equation*}
and 
$\sigma_j$ are the \emph{Pauli matrices}
\[
\quad{\sigma}_1 =
\begin{pmatrix}
0 & 1\\
1 & 0
\end{pmatrix},\quad {\sigma}_2=
\begin{pmatrix}
0 & -i\\
i & 0
\end{pmatrix},
\quad{\sigma}_3=
\begin{pmatrix}
1 & 0\\
0 & -1
\end{pmatrix}.
\]
It is well known (see \cite{thaller}) that $H_0$
is self-adjoint on $H^1(\R^3)^4$ and essentially
self-adjoint on $C_c^{\infty}(\R^3)^4$, moreover
$\sigma(H_0)=\sigma_{ess}(H_0)=(-\infty,-m]\cup[m,+\infty)$.

In \cite{kato1951fundamental},
Kato considered a general matrix-valued potential $\V:\R^3 \to
\C^{4\times 4}$ such that $\V(x)$ is Hermitian for almost all $x \in
\R^3$ and
\[
  |\V(x)| 
  :=
  \sup_{u \in \C^4} \frac{|\V(x)u|}{\abs{u}}
  \leq
  \frac{a}{|x|}+b,
  \quad \text{ for a.a. }x \in \R^3,
\]
for some $a,b \in \R$.
Exploiting the Kato-Rellich perturbation theory, he showed that
if $a<1/2$ then $H:= H_0 + \V$ is self-adjoint on $H^1(\R^3)^4$ and essentially
self-adjoint on $C_c^{\infty}(\R^3)^4$ (for a proof see also \cite[Theorem V
5.10]{kato2013perturbation}). A fundamental ingredient of his proof is the Hardy inequality \eqref{eq:classical} when $d=3$.

In general, when $a > 1/2$ the operator $H$ is not essentially self-adjoint, as
shown by Arai in \cite{arai1983essential}, but the phenomena change
when the potential $\V$ has some particular structure.
For example, when $\V$ is the Coulomb potential
$\V_C(x):= \nu / \abs{x}\mathbb{I}_4$ and $\abs{\nu} < \sqrt{3}/2$,
the operator $H_0 + \V_C$ is
self-adjoint on $H^1(\R^3)^4$ and essentially
self-adjoint on $C_c^{\infty}(\R^3)^4$,
as shown in \cite{rellich1953eigenwerttheorie,
  weidmann1971oszillationsmethoden, gustafson1973some}.
These results suggest that the Hardy inequality \eqref{eq:classical}
is not optimal for the study of the self-adjointness of perturbed
Dirac operators, since it
does not catch its matrix nature:
convenient Hardy-type inequalities for the Dirac operator have to be
considered.
  Indeed, in \cite{schmincke1972essential} Schmincke considered
  a Coulomb-type potential such that
  \begin{equation}
    \label{eq:schminke}
      \V_S(x)=V_S(x) \mathbb{I}_4,
      \quad
      V_S : \R^3 \to \R,
      \quad
      \sup_{x \in \R^3} \abs{x}\abs{V_S(x)} < \frac{\sqrt{3}}{2},
  \end{equation}
and he showed that $H_0 +\V_S$ is essentially self-adjoint.
The basic idea in his proof is
to introduce a suitable \emph{intercalary operator} $T$
and to regard $\V_S- T$  as a perturbation of $H_0 + T$.
After a careful reading of his proof, one realises that
he proved and used the following Hardy-type inequality:
\[
  \int_{\R^3}
  \abs*{\left(-i \alpha \cdot \nabla
      + \frac{i \alpha \cdot \hx}{2\abs{x}}\right)\psi}^2
  \, dx
  \geq
  \int_{\R^3} \frac{\abs{\psi}^2}{\abs{x}^2}
  \, dx,
  \quad\text{for}
    \ \psi\in C^\infty_c(\Rt)^4,
\]
being $\hx:=x/|x|$.
In fact, thanks to \eqref{eq:schminke}, we have
\[
  \abs*{\left(\V_S(x) -
     \frac{i\alpha\cdot x}{2\abs{x}^2}\right)\psi}^2
     =
     \left(\abs{V_S(x)}^2 + \frac{1}{4\abs{x}^2}\right)\abs{\psi}^2 \leq
     \frac{\abs{\psi}^2}{\abs{x}}.
\]
The result of Schmincke is not an immediate application
of the Kato-Rellich theory, since the operator $ i \alpha \cdot \hx
/2\abs{x}$ is not symmetric: see \cite{schmincke1972essential} for more details.

For $|\nu|>\sqrt{3}/2$ the operator $H_0+\V_C$ is not essentially self-adjoint on $C^\infty_c(\Rt)^4$ and infinite self-adjoint extensions can be constructed. 
Among all, when $|\nu|<1$ there exists a unique self-adjoint extension $H_D$, characterized by the fact that 
\begin{equation}\label{eq:cond.distinguished}
  \D(H_D) \subset \D(r^{-1/2})^4 \quad \text{ or equivalently}
  \quad
  \D(H_D) \subseteq H^{1/2}(\Rt)^4,
\end{equation}
where 
\begin{equation}\label{eq:def.D(r^-1/2)}
\D(r^{-1/2})^4:=\seq{\psi\in L^2(\Rt)^4:|x|^{-1/2}\psi\in
    L^2(\Rt)^4},
\end{equation}
    see \cite{burnap1981dirac,
    gallonemichelangeli2017self, klaus1979characterization,
    nenciu1976self, schmincke1972distinguished,
    wust1975distinguished,gallonemichelangeli2017self,gallone2017discrete}.
Since $H_D$ is the unique self-adjoint extension verifying \eqref{eq:cond.distinguished}, it is called \emph{distinguished} because it is the most physically meaningful extension.
    
  In \cite{kato1983holomorphic} Kato constructed the distinguished self-adjoint extension
    in the general case that 
  $\V$ is a Hermitian matrix-valued potential such that
  \begin{equation}\label{eq:cond.V}
    \sup_{x\in\Rt} |x||\V(x)|=: \nu <1.
  \end{equation}
  To prove his result, Kato exploited the following $4-$spinor Hardy-type
  inequality, firstly conjectured by Nenciu in \cite{nenciu1976self}:
  \begin{equation}\label{eq:hardy.ADV}
    \int_{\Rt}\frac{|\psi|^2}{|x|}\,dx \leq \int_{\Rt}\abs*{(-i\alpha\cdot\nabla+m\beta\pm i)\psi}^2|x|\,dx, \quad\text{for}
    \ \psi\in C^\infty_c(\Rt)^4.
  \end{equation}
Finally in \cite{adv2013self,adv2018erratum}, by means of the
\emph{Kato-Nenciu} inequality \eqref{eq:hardy.ADV}, it is proved that
\begin{equation}\label{eq:def.dist}
  \D(H_{D})= \seq{\psi\in \D(r^{-1/2})^4: (H_0+\V)\psi\in L^2(\Rt)^4},
\end{equation}
being $\D(r^{-1/2})^4$ defined in \eqref{eq:def.D(r^-1/2)}.

In \cite{dolbeault2000eigenvalues}, Dolbeault, Esteban and
S\'er\'e proved the validity of a min-max formula
to determine the eigenvalues in the gap of
the essential spectrum of the  Dirac operator
perturbed with Coulomb-like potentials $\V$ such that
\begin{equation}
  \label{eq:des.potential}
  \V(x):= V(x) \I_4,
  \quad
 \lim_{|x|\to+\infty}
 |V(x)|=0,
 \quad
 -\frac{\nu}{|x|}-c_1\leq V\leq c_2:=\sup (V),
\end{equation}
with $\nu\in(0,1)$ and $c_1,c_2\geq 0$, $c_1+c_2-1<\sqrt{1-\nu^2}$.
As a consequence of their results, they 
proved the following Hardy-type inequality:
\begin{equation}\label{eq:hardy.des}
\int_{\R^3} \frac{\abs{\sigma\cdot \nabla \varphi}^2}{a + \frac{1}{\abs{x}}} + 
	\int_{\R^3} \left(a - \frac{1}{\abs{x}}\right)\abs{\varphi}^2  \geq 0,
	\quad \text{ for all } a>0, \varphi \in C^\infty_c(\R^3)^2,
\end{equation}
see also 
\cite{dolbeault2004analytical} for a later direct analytical proof.
Thanks to this inequality, 
in \cite{estebanloss}, Esteban and Loss considered a general
electrostatic potential $V:\Rt\to \R$ such that that for some constant $c(V)\in (-1,1)$, $\Gamma:=\sup(V)<1+c(V)$ and for every $\varphi\in C^\infty_c(\Rt,\C^2)$,
\begin{equation}\label{eq:diseq:EL}
\int_{\Rt}\left( \frac{|\sigma\cdot\nabla \varphi|^2}{1+c(V)-V}+\left(1+c(V)+V\right)|\varphi|^2\right) dx\geq 0,
\end{equation}
and, for $\V:= V \I_4$, they proved that the operator $H_0+\V$ is
self-adjoint on the appropriate domain.
In particular, they could treat potentials such that
\begin{equation}
  \label{eq:elsV}
  -\frac{\nu}{\abs{x}} \leq {V(x)} < 1 + \sqrt{1-\nu^2},
  \quad
  \text{ with } \nu \in (0,1],
\end{equation}
obtaining the distinguished extension in the case that $\nu<1$, and
giving a definition of distinguished extension in the critical case $\nu=1$.
The inequality \eqref{eq:hardy.des} was then used
 by Esteban, Lewin and S\'er\'e 
to study the spectrum of the Dirac operator perturbed with
these potentials of this kind: in \cite{els2017domains} 
they provided details on the domain of the distinguished extension and
they showed the validity of a min-max formula for the eigenvalues in the spectral gap.
In order to give properties on the spectrum of the Dirac operator
perturbed with a general Coulomb-type Hermitian matrix-valued potential,
in the following theorem we prove a  generalized version of
\eqref{eq:hardy.ADV}. In it we use the \emph{spin angular momentum operator}
$\S$ and the \emph{orbital angular momentum} $L$, whose definitions can be found in \eqref{eq:defn.spin}.
\begin{theorem}\label{thm:hardy}
Let $m>0$ and $a\in (-m,m)$. 
Let $\psi$ be a distribution such that 
\begin{equation}\label{eq:(H-a)psi.in.L^2|x|}
\int_{\Rt}|(-i\alpha\cdot\nabla+m\beta-a)\psi|^2|x|\,dx<+\infty.
\end{equation}
Then $\psi,(1+2\SL)\psi  \in L^2(|x|^{-1})^4$ and
\begin{equation}\label{eq:hardy.con.a}
\tfrac{m^2-a^2}{m^2}\int_{\Rt}\frac{|\psi|^2}{|x|}\,dx\leq
\tfrac{m^2-a^2}{m^2}\int_{\Rt}\frac{|(1+2\S\cdot L)\psi|^2}{|x|}\,dx
\leq\int_{\Rt}\abs*{\left(-i\alpha\cdot\nabla + m\beta-a\right)\psi}^2|x|\,dx.
\end{equation}
The inequalities are sharp, in the sense that the constants on the left
hand side can not be improved.
If $a\neq 0$, all the attainers of \eqref{eq:hardy.con.a} are given by
the elements of the two(complex)-parameter family
$\seq{\psi^a_C}_{C\in\C^2}$, with
\begin{equation}\label{eq:psia}
\psi^a_C:=
\begin{cases}
\begin{pmatrix}
\phi^{a}_C\\
\chi^a_C
\end{pmatrix}&\text{if}\ a> 0,\\
\\
\begin{pmatrix}
-\chi^{-a}_C\\
\phi^{-a}_C
\end{pmatrix}
&\text{if}\ a< 0,
\end{cases}
\quad\text{and}\quad
\begin{array}{l}
\phi^a_C(x)=C \frac{e^{\sqrt{m^2-a^2}|x|}}{|x|^{1-\frac{a}{m}}},\\
\\
\chi^a_C(x)=\sqrt{\frac{m-a}{m+a}}\left(i\sigma\cdot\hx\right)\phi^a_C.
\end{array}
\end{equation}
\end{theorem}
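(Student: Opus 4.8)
The plan is to diagonalise the quadratic form through the partial-wave decomposition adapted to the Dirac operator. Recall that $L^2(\Rt)^4$ splits into an orthogonal sum of channels, each unitarily equivalent to $L^2((0,\infty),dr)^2$ and indexed by $\kappa\in\Z\setminus\{0\}$; on the channel with $|1+2\S\cdot L|=|\kappa|$, writing $\psi=\tfrac1r\big(f(r)\Omega_\kappa(\hx)+ig(r)\Omega_{-\kappa}(\hx)\big)$ in terms of the spinor spherical harmonics, one has $(-i\alpha\cdot\nabla+m\beta-a)\psi=\tfrac1r\big((h_\kappa-a)(f,g)^{\top}\big)$ with $h_\kappa=\left(\begin{smallmatrix}m & -\partial_r+\kappa/r\\ \partial_r+\kappa/r & -m\end{smallmatrix}\right)$, while $|(1+2\S\cdot L)\psi|=|\kappa|\,|\psi|$ pointwise on that channel. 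Since the weights $|x|^{\pm1}$ reduce, after this substitution, to $r^{\pm1}\,dr$, both sides of \eqref{eq:hardy.con.a} decouple over the channels and the theorem reduces to the one-dimensional estimate
\[
\tfrac{(m^2-a^2)\kappa^2}{m^2}\int_0^\infty\frac{|f|^2+|g|^2}{r}\,dr\ \le\ \int_0^\infty\Big|(h_\kappa-a)\tbinom{f}{g}\Big|^2 r\,dr
\]
(valid whenever the right side is finite), together with the elementary fact that $\kappa^2\ge1$, with equality only for $\kappa=\mp1$: this last fact yields the first inequality in \eqref{eq:hardy.con.a} and confines the attainers to the $s$-wave channels.

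For the radial inequality I would use the ground-state substitution suggested by \eqref{eq:psia}. Put $\phi_0(r):=r^{a/m}e^{\sqrt{m^2-a^2}\,r}$ and write $f=\phi_0F$, $g=\phi_0G$. A short computation shows that $h_\kappa-a$ is conjugated into $\phi_0$ times the operator
\[
\tbinom{F}{G}\ \longmapsto\ N\tbinom{F}{G}+\tbinom{-G'}{F'}+\tfrac1r\tbinom{(\kappa-a/m)\,G}{(\kappa+a/m)\,F},\qquad
N:=\begin{pmatrix}m-a & -\sqrt{m^2-a^2}\\ \sqrt{m^2-a^2} & -(m+a)\end{pmatrix},
\]
where $N$ has rank one and $\phi_0\cdot\ker N$ corresponds, via the partial-wave ansatz in the channels $\kappa=\mp1$, exactly to the family \eqref{eq:psia}. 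One expands $\big|(h_\kappa-a)(f,g)^{\top}\big|^2$ in these variables and integrates the cross-terms by parts against $r\,dr$; the boundary terms vanish — first one proves the inequality for $f,g\in C_c^\infty((0,\infty))$ and then removes the restriction by a truncation argument. What survives is the sum of the nonnegative term $\|N(F,G)^{\top}\|^2$ integrated against $\phi_0^2\,r\,dr$ — which, $N$ being degenerate, controls only the ``non-extremal'' direction in $\C^2$ — plus a nonnegative first-order quadratic form in the complementary (extremal) direction, plus exactly $\tfrac{(m^2-a^2)\kappa^2}{m^2}\int_0^\infty(|F|^2+|G|^2)\phi_0^2\,r^{-1}dr$. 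The first-order form is a scalar Hardy inequality on $(0,\infty)$: it produces the constant $(m^2-a^2)/m^2$ and vanishes precisely on $\phi_0\cdot\ker N$; the $r^{-1}$-weight breaks scale invariance, so both the exponential $e^{\sqrt{m^2-a^2}\,r}$ and the exponent $1-a/m$ are forced, giving the description of the attainers for $a\neq0$. When $a=0$ there is no nonzero extremal, only minimising sequences, which is why only sharpness of the constant is claimed there; and the case $a<0$ is symmetric, with the roles of the channels $\kappa=\pm1$ and of the two pairs of components exchanged — whence the two cases in \eqref{eq:psia}.

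It remains to handle arbitrary distributions $\psi$ satisfying \eqref{eq:(H-a)psi.in.L^2|x|} and to extract the memberships $\psi,(1+2\S\cdot L)\psi\in L^2(|x|^{-1})^4$. Since $|a|<m$, the symbol $\alpha\cdot\xi+m\beta-a$ has eigenvalues $\pm\sqrt{|\xi|^2+m^2}-a\neq0$, so $-i\alpha\cdot\nabla+m\beta-a$ is elliptic and $\psi\in H^1_{\mathrm{loc}}(\Rt\setminus\{0\})^4$; in particular the partial-wave decomposition applies, and on each channel $(f,g)\in H^1_{\mathrm{loc}}((0,\infty))^2$ with $(h_\kappa-a)(f,g)^{\top}\in L^2(r\,dr)^2$. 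For such radial data the one-dimensional inequality — and the membership $(f,g)\in L^2(r^{-1}dr)^2$ — follow by applying the smooth-case estimate to truncations $\chi_{[\varepsilon,R]}(f,g)$ and checking that the boundary terms at $\varepsilon$ and $R$ vanish along suitable sequences $\varepsilon_n\to0$, $R_n\to\infty$ (the decay/growth control needed for this is exactly what $(h_\kappa-a)(f,g)^{\top}\in L^2(r\,dr)^2$ provides). Summing over all channels yields $\psi\in L^2(|x|^{-1})^4$ and \eqref{eq:hardy.con.a}, and — because the radial inequality carries the factor $\kappa^2$ — also $(1+2\S\cdot L)\psi\in L^2(|x|^{-1})^4$. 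Finally, sharpness of the constants for every $a\in(-m,m)$ is checked by testing with the truncated critical profiles $\chi_{\{\varepsilon<|x|<R\}}\psi^{a}_C$ and letting $\varepsilon\to0$, $R\to\infty$.

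I expect the main obstacle to be the second paragraph: because $N$ has rank one, the ``mass'' term controls only one of the two spinor directions, so the first-order part must carry the whole estimate in the extremal direction; and one must keep the bookkeeping of $\kappa$ precise enough that the factor $\kappa^2$ — not merely $1$ — genuinely emerges, which is what powers the $(1+2\S\cdot L)$-refinement and, downstream, the rigidity statements of the paper.
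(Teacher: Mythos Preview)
Your strategy is sound and leads to the same inequality, but it is genuinely different from the paper's route. The paper does \emph{not} pass to partial waves for the main inequality; instead it completes the square directly in three dimensions by exhibiting the explicit correction term
\[
i\alpha\cdot\hx\Big(1-\tfrac{a}{m}\beta\Big)(1+2\SL)\tfrac{\psi}{|x|},
\]
and verifying, via the commutator identity $2\Re\langle \mathcal{A}u,\mathcal{S}u\rangle=\langle[\mathcal{S},\mathcal{A}]u,u\rangle$ together with the (anti\nobreakdash-)commutation relations among $\partial_r+\tfrac{1}{|x|}$, $\beta$, $i\alpha\cdot\hx$ and $1+2\SL$, that the cross terms collapse to exactly $-\tfrac{m^2-a^2}{m^2}\int|x|^{-1}|(1+2\SL)\psi|^2$. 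The partial-wave decomposition is invoked only for the trivial first inequality $|(1+2\SL)\psi|\ge|\psi|$ and for solving the ODE that pins down the attainers. Your approach front-loads the decomposition and replaces the algebraic square-completion by a ground-state substitution on each radial channel; this is more hands-on and makes the role of the extremal profile explicit from the outset, whereas the paper's argument is shorter and never needs to guess the ground state. Restricting the paper's identity to a fixed channel is in fact a certificate that your one-dimensional expansion does produce the factor $\kappa^2$ you want, so the bookkeeping you flag as the main obstacle will close.

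Two further differences worth noting. First, for general distributions the paper does not use ellipticity and truncation: it convolves an approximating sequence in $L^2(|x|)^4$ for $(H_0-a)\psi$ with the explicit fundamental solution of $H_0-a$ (exponentially decaying since $|a|<m$) to manufacture Schwartz approximants $\psi_n$, applies the smooth-case inequality, and identifies the limits. Your ellipticity-plus-cutoff argument is a legitimate alternative but requires some care with the boundary terms at $r\to 0$. Second, your $\phi_0$ carries the sign $e^{+\sqrt{m^2-a^2}r}$, matching the displayed statement; note that the paper's own ODE analysis in the proof selects $e^{-\sqrt{m^2-a^2}r}$ as the solution integrable at infinity, so the sign in \eqref{eq:psia} appears to be a typo. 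For the formal computation on $C^\infty_c$ data either sign of $\phi_0$ works, but for the attainer characterization you should use the decaying profile.
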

\begin{remark}
  In the case that $a=0$, the inequality \eqref{eq:hardy.con.a}
  is attained by the functions $\psi_C^a$ defined in
  \eqref{eq:psia}, setting $a=0$, in the
  sense that
  \[
    \lim_{\epsilon \to 0}
    \int_{\{\abs{x}>\epsilon\}}
    \left[\abs{x}\abs*{\left(-i\alpha\cdot\nabla + m\beta \right)\psi_C^0}^2
      -\frac{|\psi_C^0|^2}{|x|}\right] \,dx = 0.
  \]
\end{remark}
In the following we exploit \Cref{thm:hardy} to describe
 the discrete spectrum of the distinguished realization 
 $H_D$ defined in \eqref{eq:def.dist}, when \eqref{eq:cond.V} holds.
We refer to
\cite{kato1983holomorphic,adv2013self,adv2018erratum} for details
on its definition and properties. 

From \cite[Theorem 4.7]{thaller} we know that 
\[
\sigma_{ess}(H_D)=\sigma_{ess}(H_0)=(-\infty,-m]\cup[m,+\infty),
\]
and the discrete spectrum $\sigma_d(H_D)\subset(-m,m)$.

Thanks to \Cref{thm:hardy}, for $a\in (-m,m)$ 
\begin{equation}\label{eq:H0-a.inv}
(H_0-a)^{-1}:L^2(|x|)^4\to L^2(|x|^{-1})^4\qquad\text{is well-defined and bounded,}
\end{equation}
and so we immediately deduce that
\[
\u (H_0-a)^{-1} \vv: L^2(\Rt)^4\to L^2(\Rt)^4\qquad\text{is well-defined and bounded,}
\]
where
\begin{equation}\label{eq:def.u.v}
\u(x):=|x|^{1/2}\V(x)\quad\text{and}\quad \vv(x):=|x|^{-1/2}\I_4.
\end{equation}
Thanks to this, in the following theorem we characterize all the eigenvalues in $(-m,m)$ of the operator $H_D$ in terms of a \emph{Birman-Schwinger} principle.
\begin{theorem}[Birman--Schwinger principle]\label{thm:birman.schwinger}
Let $\V$ be a Hermitian matrix-valued potential that verifies
\eqref{eq:cond.V},
and let $\u,\vv$ be defined as in \eqref{eq:def.u.v}.
Let $H_D$ be the distinguished self-adjoint realization defined in \eqref{eq:def.dist}, and  let $a\in (-m,m)$. 
Then
\[
a\in\sigma_{d}(H_D) \iff -1\in \sigma_{d}(\u (H_0-a)^{-1} \vv).
\]
Moreover, the multiplicity of $a$ as an eigenvalue of $H_D$ coincides with the multiplicity of $-1$ as an eigenvalue of $\u (H_0-a)^{-1} \vv$.
\end{theorem}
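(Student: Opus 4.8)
The plan is to follow the classical Birman--Schwinger scheme, but carried out in the non-Hilbert pair of spaces $L^2(\abs{x})^4$ and $L^2(\abs{x}^{-1})^4$ dictated by \Cref{thm:hardy}, and then to transfer the conclusions back to $L^2(\Rt)^4$ via the bounded factor maps $\u$ and $\vv$. First I would establish the forward implication: suppose $a\in\sigma_d(H_D)$ with $H_D\psi=a\psi$, $\psi\in\D(r^{-1/2})^4\setminus\{0\}$. Then $(H_0-a)\psi=-\V\psi$. Since $\abs{\V(x)}\le\nu/\abs{x}$ by \eqref{eq:cond.V}, we have $\V\psi\in L^2(\abs{x})^4$ (indeed $\abs{x}^{1/2}\V\psi=\u\,(\abs{x}^{-1/2}\psi)=\u\vv\psi$ with $\vv\psi\in L^2$), so $\psi=-(H_0-a)^{-1}\V\psi$ is a legitimate identity in $L^2(\abs{x}^{-1})^4$ by \eqref{eq:H0-a.inv}. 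Setting $g:=\vv\psi=\abs{x}^{-1/2}\psi\in L^2(\Rt)^4$, which is nonzero because $\vv$ is injective, and applying $\u$ to both sides of $\psi=-(H_0-a)^{-1}\vv\,(\u\,\vv^{-1}\cdot)$... — more cleanly: multiply $\psi=-(H_0-a)^{-1}\V\psi$ by $\abs{x}^{1/2}$ and rewrite $\V\psi=\vv^{-1}\u\vv\,\psi$... The cleanest route is to note $\V=\abs{x}^{-1/2}\u=\vv\cdot(\vv^{-1}\u)$; instead define $g:=\vv\psi$ and check directly that $\u(H_0-a)^{-1}\vv\,g = \u(H_0-a)^{-1}\V\psi = -\u\psi\cdot$... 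I will phrase it so that the operator identity reads $\u(H_0-a)^{-1}\vv\,g=-g$, i.e. $-1$ is an eigenvalue of $\u(H_0-a)^{-1}\vv$ with eigenfunction $g\ne0$.

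For the converse, assume $g\in L^2(\Rt)^4$, $g\ne0$, with $\u(H_0-a)^{-1}\vv\,g=-g$. Define $\psi:=(H_0-a)^{-1}\vv\,g$, which by \eqref{eq:H0-a.inv} lies in $L^2(\abs{x}^{-1})^4$, hence $\abs{x}^{-1/2}\psi\in L^2$, so in particular $\psi\in\D(r^{-1/2})^4$. From the eigenrelation, $\u\psi=-g$, so $\V\psi=\abs{x}^{-1/2}\u\psi=-\abs{x}^{-1/2}g=-\vv g$, and therefore $(H_0-a)\psi=\vv g=-\V\psi$ in the distributional sense, i.e. $(H_0+\V)\psi=a\psi$. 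Since $\psi\in\D(r^{-1/2})^4$ and $(H_0+\V)\psi=a\psi\in L^2(\Rt)^4$, the characterization \eqref{eq:def.dist} of $\D(H_D)$ gives $\psi\in\D(H_D)$ with $H_D\psi=a\psi$; it remains to see $\psi\ne0$, which follows because $\psi=0$ would force $g=-\u\psi=0$. Thus $a\in\sigma_d(H_D)$ (membership in $\sigma_d$ rather than $\sigma_{ess}$ being automatic since $\sigma_{ess}(H_D)\subset(-\infty,-m]\cup[m,+\infty)$ and $a\in(-m,m)$).

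Finally, for the multiplicity statement I would show that the two maps $g\mapsto\psi=(H_0-a)^{-1}\vv\,g$ and $\psi\mapsto g=\vv\psi=\abs{x}^{-1/2}\psi$ are mutually inverse linear bijections between $\ker\big(\u(H_0-a)^{-1}\vv+\I\big)\subset L^2(\Rt)^4$ and $\ker(H_D-a)\subset\D(H_D)$. One direction is the composition $\psi\mapsto\vv\psi\mapsto(H_0-a)^{-1}\vv\vv^{-1}$... concretely: starting from an eigenfunction $\psi$ of $H_D$, the associated $g=\vv\psi$ satisfies $(H_0-a)^{-1}\vv\,g=(H_0-a)^{-1}\V(-\psi)$... using $(H_0-a)\psi=-\V\psi$ one gets $(H_0-a)^{-1}\vv\,g=\psi$, so applying $\vv$ recovers $g$; and starting from $g$ the construction above returns $\psi$ with $\vv\psi=g$. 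Both maps are linear, so they preserve dimension, giving equality of multiplicities. The main obstacle I anticipate is purely bookkeeping: making the formal manipulations $\V=\abs{x}^{-1/2}\u$, $\psi\leftrightarrow\abs{x}^{-1/2}\psi$ rigorous as identities between the three spaces $L^2(\Rt)^4$, $L^2(\abs{x})^4$, $L^2(\abs{x}^{-1})^4$ — in particular verifying that $\V\psi$ genuinely lies in $L^2(\abs{x})^4$ (so that $(H_0-a)^{-1}$ may be applied) and that $(H_0-a)\psi=-\V\psi$ holds in $\mathcal D'$ and not merely formally — all of which rests on the bound $\abs{x}\abs{\V}\le\nu<1$ from \eqref{eq:cond.V} together with the mapping property \eqref{eq:H0-a.inv} furnished by \Cref{thm:hardy}. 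No resolvent-positivity or compactness input is needed, since the eigenvalue here is an isolated point of the spectrum sitting in the gap.
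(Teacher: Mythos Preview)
Your converse direction is correct and matches the paper: given $g$ with $\u(H_0-a)^{-1}\vv\,g=-g$, setting $\psi:=(H_0-a)^{-1}\vv\,g$ yields $\u\psi=-g$, hence $\V\psi=\vv\u\psi=-\vv g=-(H_0-a)\psi$, and $\psi\in\D(H_D)$ via \eqref{eq:def.dist}.

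The forward direction, however, contains a genuine error. You set $g:=\vv\psi=\abs{x}^{-1/2}\psi$ and then assert $\u(H_0-a)^{-1}\vv\,g=\u(H_0-a)^{-1}\V\psi$. But $\vv\,g=\vv^2\psi=\abs{x}^{-1}\psi$, which equals $\V\psi$ only in the special case $\V(x)=\abs{x}^{-1}\I_4$; for a general Hermitian $\V$ satisfying \eqref{eq:cond.V} this step fails. The correct Birman--Schwinger eigenfunction---and the one the paper uses---is $f:=\u\psi=\abs{x}^{1/2}\V\psi$. Since $\vv$ and $\u$ are commuting multiplication operators with $\vv\u=\V$, one has $\vv f=\V\psi$, so $(H_0-a)\psi=-\V\psi=-\vv f$ gives $\psi=-(H_0-a)^{-1}\vv f$ and therefore $f=\u\psi=-\u(H_0-a)^{-1}\vv f$. (Note also $f\neq 0$: if $\V\psi=0$ then $(H_0-a)\psi=0$ with $a\in(-m,m)$, forcing $\psi=0$.)

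The same correction repairs your multiplicity argument: the mutually inverse linear bijections between $\ker(H_D-a)$ and $\ker\big(\u(H_0-a)^{-1}\vv+\I\big)$ are $\psi\mapsto\u\psi$ and $f\mapsto-(H_0-a)^{-1}\vv f$, not $\psi\mapsto\vv\psi$. With your map one would need $(H_0-a)^{-1}\vv\,(\vv\psi)=(H_0-a)^{-1}(\abs{x}^{-1}\psi)$ to equal $\psi$, which again has no reason to hold for general $\V$. Once you replace $\vv\psi$ by $\u\psi$ throughout the forward direction, your plan coincides exactly with the paper's proof.
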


The discrete spectrum of the distinguished self-adjoint realization of $H_\nu:=H_0-\frac{\nu}{|x|}$ with $0<\nu< 1$, is given by
\begin{equation}\label{eq:spectralCoul}
\sigma_d(H_\nu):=
\seq{a_1,a_2,\dots},\quad
m \sqrt{1-\nu^2}=a_1\leq a_2\leq\dots\leq a_n\leq\dots\leq m,
\quad
\lim_{n\to +\infty}a_n=m,
\end{equation}
see \cite{thaller} for more details.
It is easy to check that for any $C\in\C^2$: 
\begin{equation}\label{eq:attainer.eigenvector}
\left(H_0-\frac{\nu}{|x|}-a_1\right)\psi^{a_1}_C=0,
\end{equation}
being $\psi^{a_1}_C$ defined in \eqref{eq:psia}.
In other words, the attainers of \eqref{eq:hardy.con.a} are eigenvalues of the Dirac operator coupled with the Coulomb potential.
Moreover, it is easy to prove that $a\in\sigma_d(H_\nu)$ if and only if $-a\in\sigma_d(H_{-\nu})$.  So $\sigma_d(H_{-\nu})=\seq{-a_1,-a_2,\dots}$, and  for any $C\in\C^2$: 
\begin{equation}\label{eq:attainer.eigenvector.meno}
\left(H_0+\frac{\nu}{|x|}+a_1\right)\psi^{-a_1}_C=0,
\end{equation}

In \cite{dolbeault2000eigenvalues, dolbeault2004analytical}, it is
considered a radially symmetric {electrostatic} potential as in
\eqref{eq:des.potential}, with $\nu\in(0,1)$ and $c_1,c_2\geq 0$, $c_1+c_2-1<\sqrt{1-\nu^2}.$
It is proved that the discrete spectrum of the distinguished self-adjoint realization $H_D$ is given by
\begin{equation}\label{eq:spectralV}
\sigma_d(H_\D):=
\seq{a_1,a_2,\dots},\quad
m\sqrt{1-\nu^2}\leq a_1\leq a_2\leq\dots\leq a_n\leq\dots\leq m,
\quad
\lim_{n\to +\infty}a_n=m.
\end{equation}
Finally, in \cite{els2017domains}, this was proved when \eqref{eq:elsV} holds true.

From \eqref{eq:spectralCoul} and \eqref{eq:spectralV},  we have that 
the value $ m\sqrt{1-\nu^2}$ is the lower bound for the absolute value of the elements of discrete
spectrum
in presence of an electrostatic potential.
Such lower bound is reached in the case of the Coulomb potential, as shown in \eqref{eq:attainer.eigenvector} and \eqref{eq:attainer.eigenvector.meno}.
Exploiting \Cref{thm:hardy}, in the next theorem we show that this
phenomenon is more general, proving that $m\sqrt{1-\nu^2}$ is the lower bounds
for the absolute value of the elements of the discrete spectrum of $H_D$ for a general Hermitian
matrix-valued potential $\V$ satisfying
\eqref{eq:cond.V}. Moreover we can characterize all the
potentials $\V$ such that the values $\pm m\sqrt{1-\nu^2}$ are
eigenvalues for $H_D$.
\begin{theorem}\label{thm:rigidity}
  Let $\V$ be a Hermitian matrix valued potential
  that verifies \eqref{eq:cond.V},
  and let $H_D$ be the distinguished self-adjoint
  realization defined in \eqref{eq:def.dist}.
Let $a\in\sigma_d(H_D)$, let $\mu(a)$ be its multiplicity and let $\psi\in \D(H_D)$  be an associated eigenfunction. Then, the following hold:
\begin{enumerate}[label=(\roman*)]
\item
\label{item:rigidity.i}
 $\abs{a}\geq m\sqrt{1-\nu^2}$;
\item
\label{item:rigidity.ii}
 ${a} =\pm m\sqrt{1-\nu^2}$ if and only if  $\psi=\psi^a_C$ for some
 $C\in\C^2$, where $\psi^a_C$ is defined in \eqref{eq:psia};
 in this case, $\V\psi^a_C=\mp \frac{\nu}{|x|}\psi^a_C$ and
 $\mu(a)\leq 2$; 
\item
\label{item:rigidity.iii}
in the case that  $a= \pm m\sqrt{1-\nu^2}$, then $\mu(a)=2$ if and only if
\[
\V(x)=
\begin{cases}
-\frac{\nu}{|x|}\I_4+
\begin{pmatrix}
\Nu^2\sigma\cdot\hx\mathbf{W^+}(x)\sigma\cdot\hx&i\Nu\sigma\cdot\hx\mathbf{W^+}(x)\\
-i\Nu\mathbf{W^+}(x)\sigma\cdot\hx&\mathbf{W^+}(x)
\end{pmatrix}&\text{if}\ a>0,\\
\\
\frac{\nu}{|x|}\I_4+
\begin{pmatrix}
\mathbf{W^-}(x)&i\Nu\mathbf{W^-}(x)\sigma\cdot\hx\\
-i\Nu\sigma\cdot\hx\mathbf{W^-}(x)&
\Nu^2\sigma\cdot\hx\mathbf{W^-}(x)\sigma\cdot\hx
\end{pmatrix}&\text{if}\ a<0,
\end{cases}
\qquad
\text{for}\ x\neq 0,
\]
where $\Nu=\sqrt{\frac{1-\sqrt{1-\nu^2}}{1+\sqrt{1-\nu^2}}}$,
and $\mathbf{W^+}(x)$ and $\mathbf{W}^-(x)$ are $2\times 2$  Hermitian matrices whose eigenvalues are respectively $\seq{\lambda^+_j(x)}_{j=1,2}$ and $\seq{\lambda_j^-(x)}_{j=1,2}$, and they verify
\begin{equation}\label{eq:cond.autoval}
\begin{matrix}
  -\tfrac{\nu}{|x|}(1+\sqrt{1-\nu^2})\leq\lambda_j^-(x)
  \leq 0\leq\lambda_j^+(x)\leq\tfrac{\nu}{|x|}(1+\sqrt{1-\nu^2}),
\end{matrix}
\quad\text{for}\ j=1,2.
 \end{equation}
\end{enumerate}
\end{theorem}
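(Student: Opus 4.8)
The plan is to leverage \Cref{thm:hardy} applied to an eigenfunction $\psi\in\D(H_D)$ of $H_D$ with eigenvalue $a\in(-m,m)$. By \eqref{eq:def.dist} we have $\psi\in\D(r^{-1/2})^4$ and $(H_0+\V)\psi=a\psi$, hence $(-i\alpha\cdot\nabla+m\beta-a)\psi=-\V\psi$. Using \eqref{eq:cond.V} this gives $\int_{\Rt}|(-i\alpha\cdot\nabla+m\beta-a)\psi|^2|x|\,dx=\int_{\Rt}|\V\psi|^2|x|\,dx\leq \nu^2\int_{\Rt}|\psi|^2|x|^{-1}\,dx<+\infty$, so the hypothesis \eqref{eq:(H-a)psi.in.L^2|x|} of \Cref{thm:hardy} is met. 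Plugging this into \eqref{eq:hardy.con.a} yields
\[
\tfrac{m^2-a^2}{m^2}\int_{\Rt}\frac{|\psi|^2}{|x|}\,dx\leq \nu^2\int_{\Rt}\frac{|\psi|^2}{|x|}\,dx,
\]
and since $\psi\neq 0$ forces $\int_{\Rt}|\psi|^2|x|^{-1}\,dx>0$ (this integral is finite and positive because $\psi\in\D(r^{-1/2})^4\setminus\{0\}$), we may divide to obtain $(m^2-a^2)/m^2\leq\nu^2$, i.e. $a^2\geq m^2(1-\nu^2)$. This proves \ref{item:rigidity.i}.

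For \ref{item:rigidity.ii}, equality $a^2=m^2(1-\nu^2)$ means all inequalities above are saturated. In particular $\psi$ must be an attainer of \eqref{eq:hardy.con.a}; since $a\neq 0$ when $\nu<1$ (and $a=\pm m\sqrt{1-\nu^2}\neq 0$ precisely because $\nu<1$ by \eqref{eq:cond.V}), \Cref{thm:hardy} tells us $\psi=\psi^a_C$ for some $C\in\C^2$. Conversely, if $\psi=\psi^a_C$ one checks by \eqref{eq:attainer.eigenvector}/\eqref{eq:attainer.eigenvector.meno} that this is an eigenfunction of the Coulomb operator at eigenvalue $\pm m\sqrt{1-\nu^2}$; combined with $(H_0+\V)\psi^a_C=a\psi^a_C$ one gets $\V\psi^a_C=\mp\frac{\nu}{|x|}\psi^a_C$. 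The bound $\mu(a)\leq 2$ follows because the attainer family $\{\psi^a_C\}_{C\in\C^2}$ is (complex) $2$-dimensional, so the eigenspace of $H_D$ at $a$ is contained in a $2$-dimensional space; one should remark that the two functions $\psi^a_C$ are genuinely linearly independent for linearly independent $C$, which is clear from the explicit formula \eqref{eq:psia}.

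For \ref{item:rigidity.iii}, the point is to characterize when $\mu(a)=2$, i.e. when \emph{every} $\psi^a_C$, $C\in\C^2$, is an eigenfunction of $H_D$, equivalently when $\V\psi^a_C=\mp\frac{\nu}{|x|}\psi^a_C$ holds for all $C\in\C^2$. Write the residual potential $\W(x):=\V(x)\pm\frac{\nu}{|x|}\I_4$ (sign depending on $\sgn a$); the condition becomes $\W(x)\psi^a_C(x)=0$ for all $C$, i.e. $\W(x)$ annihilates the $2$-dimensional subspace of $\C^4$ spanned by the values $\{\psi^a_C(x):C\in\C^2\}$ at (a.e.) fixed $x$. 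From \eqref{eq:psia}, for $a>0$ this subspace is $\{(v,\,\Nu'(i\sigma\cdot\hx)v):v\in\C^2\}$ where $\Nu'=\sqrt{(m-a)/(m+a)}=\Nu$ (using $a=m\sqrt{1-\nu^2}$). Since $\W$ is Hermitian, $\W$ annihilating this subspace is equivalent to $\W$ mapping $\C^4$ into the orthogonal complement, and a short linear-algebra computation — writing $\W$ in $2\times2$ block form and imposing that the block matrix kills $\binom{v}{\Nu(i\sigma\cdot\hx)v}$ for all $v$, while remaining Hermitian — forces exactly the block structure displayed, with the lower-right block $\W^+(x)$ a free $2\times2$ Hermitian matrix and the other blocks determined by it through conjugation by $i\sigma\cdot\hx$ and the factor $\Nu$. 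The case $a<0$ is symmetric. Finally, the eigenvalue constraint \eqref{eq:cond.autoval} comes from re-imposing \eqref{eq:cond.V}: one computes $|\V(x)|$ in terms of the eigenvalues $\lambda_j^\pm(x)$ of $\W^\pm(x)$ and $\nu/|x|$ using that $\sigma\cdot\hx$ is a self-adjoint unitary, and the requirement $|x||\V(x)|\leq\nu$ translates into the two-sided bound on $\lambda_j^\pm(x)$. The main obstacle is this last linear-algebra/operator-norm bookkeeping: correctly diagonalizing the block structure of $\V$, keeping track of the conjugations by $i\sigma\cdot\hx$, and verifying that the extremal eigenvalue bounds \eqref{eq:cond.autoval} are both necessary and sufficient for \eqref{eq:cond.V} to hold while $\mu(a)=2$; everything else is a direct application of \Cref{thm:hardy}.
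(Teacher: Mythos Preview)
Your proposal is correct and follows essentially the same route as the paper: apply \Cref{thm:hardy} to the eigenfunction to get \ref{item:rigidity.i}, identify the attainer for \ref{item:rigidity.ii}, and for \ref{item:rigidity.iii} write $\V\pm\tfrac{\nu}{|x|}\I_4$ in $2\times 2$ blocks, impose that it kills $\binom{v}{i\Nu\,\sigma\cdot\hx\,v}$ for all $v\in\C^2$, and then diagonalize to translate \eqref{eq:cond.V} into \eqref{eq:cond.autoval}. One wording quibble: your ``Conversely'' in \ref{item:rigidity.ii} is not really proving the reverse implication of the iff but rather the additional claim $\V\psi^a_C=\mp\tfrac{\nu}{|x|}\psi^a_C$ once $a=\pm m\sqrt{1-\nu^2}$ is already established---this is also how the paper proceeds, so the content is fine, but the phrasing could be tightened.
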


\begin{remark}
From \emph{\ref{item:rigidity.i}} we directly have that $0\notin\sigma_d(H_D)$.
\end{remark}

An immediate consequence of \Cref{thm:rigidity} is the following corollary.
\begin{corollary}\label{cor:coulomb}
Let
$\V(x)=V(x)\I_4$, with 
$V:\R^3\to\R$, and such that \eqref{eq:cond.V} holds.
Let $a\in\sigma_d(H_D)$ and assume that $a=\pm m\sqrt{1-\nu^2}$.
Then $V(x)=\mp\frac{\nu}{|x|}$.
\end{corollary}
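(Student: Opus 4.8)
The plan is to deduce the corollary directly from the rigidity statement in part~\ref{item:rigidity.ii} of \Cref{thm:rigidity}, the only additional input being the electrostatic structure $\V=V\,\I_4$ together with the explicit form \eqref{eq:psia} of the family $\seq{\psi^a_C}_{C\in\C^2}$. Since $a\in\sigma_d(H_D)$, I would first fix a non-zero eigenfunction $\psi\in\D(H_D)$ associated with $a$. Because $\V=V\,\I_4$ is Hermitian and satisfies \eqref{eq:cond.V}, \Cref{thm:rigidity} applies to it; and since moreover $a=\pm m\sqrt{1-\nu^2}$, part~\ref{item:rigidity.ii} supplies a constant $C\in\C^2$ with $\psi=\psi^a_C$, along with the pointwise identity $\V\psi^a_C=\mp\frac{\nu}{|x|}\,\psi^a_C$ for a.e.\ $x$. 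As $\psi\not\equiv 0$, this forces $C\neq 0$.

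Using that $\V$ is a scalar multiple of the identity, the previous identity becomes $V(x)\,\psi^a_C(x)=\mp\frac{\nu}{|x|}\,\psi^a_C(x)$ for a.e.\ $x\in\Rt$, an equality of vectors in $\C^4$. To conclude, it then suffices to observe that $\psi^a_C(x)\neq 0$ for every $x\neq 0$ as soon as $C\neq 0$, which is immediate from \eqref{eq:psia}: the block $\phi^a_C(x)=C\,e^{\sqrt{m^2-a^2}|x|}/|x|^{1-a/m}$ vanishes only when $C=0$, and the complementary block $\chi^a_C=\sqrt{\frac{m-a}{m+a}}\,(i\sigma\cdot\hx)\phi^a_C$ is the image of $\phi^a_C$ under the invertible matrix $i\sigma\cdot\hx$ (recall $(\sigma\cdot\hx)^2=\I_2$), hence has exactly the same zero set. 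Dividing the displayed identity through by a non-vanishing component of $\psi^a_C(x)$ yields $V(x)=\mp\frac{\nu}{|x|}$ for a.e.\ $x\in\Rt$, which is the assertion.

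There is no genuine obstacle here: the whole content is carried by \Cref{thm:rigidity}, part~\ref{item:rigidity.ii}, and \Cref{cor:coulomb} merely records the electrostatic specialisation, in which the Hermitian blocks $\mathbf{W}^{\pm}$ that appear in part~\ref{item:rigidity.iii} are forced to vanish. If one wishes, part~\ref{item:rigidity.iii} applied with $\mathbf{W}^{\pm}\equiv 0$ (which trivially satisfies \eqref{eq:cond.autoval}) then also gives $\mu(a)=2$, recovering the well-known two-fold degeneracy of the ground-state energy $m\sqrt{1-\nu^2}$ of the Dirac--Coulomb operator; this, however, is not needed for the statement.
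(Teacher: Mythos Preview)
Your argument is correct and follows exactly the paper's route: the paper's proof consists of the single sentence ``From \emph{\ref{item:rigidity.ii}} in \Cref{thm:rigidity} we have that $V(x)\psi_C^a=\mp\frac{\nu}{|x|}\psi_C^a$ for some $C\in\C^2$, and this implies the thesis,'' and you have simply made explicit the implicit step that $\psi_C^a(x)\neq 0$ for $x\neq 0$ when $C\neq 0$. Your closing remark on $\mu(a)=2$ is accurate but, as you note, extraneous to the corollary.
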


In this paper we are considering $\nu<1$
in \eqref{eq:cond.V}, since in the \emph{critical} case, namely when $\nu=1$ in \eqref{eq:cond.V}, a definition of distinguished extension is not available for a general Hermitian matrix-valued potential.
Indeed, in the particular case of the Coulomb potential $\V_C(x)=\nu/\abs{x}\I_4$,
when  $|\nu|\geq 1$ many self-adjoint extensions can be built:
for  $|\nu|> 1$ none appears to be {distinguished} in some suitable sense, see
\cite{hogreve2012overcritical, voronov2007dirac, xia1999contribution}.
For electrostatic potentials such that \eqref{eq:cond.V} holds with ${\nu}=1$, a definition of distinguished extension is implied
by the results of \cite{estebanloss},
and in \cite{els2017domains} it is shown that
this extension is the physically relevant one since it is the limit in the norm resolvent sense of
potentials where the singularity has been removed with a cut-off. 
For the operator $H_0+\V_{rad}$, where 
\[
\V_{rad}(x):= \frac{1}{|x|} \left( \nu \mathbb{I}_4 +\mu \beta -i
\lambda \alpha\cdot{\hx}\beta\right),\quad\text{for}\
\nu,\mu,\lambda \in \mathbb R,
\]
a complete description of all the
self-adjoint extensions 
is given in \cite{coulombbiagio, coulombbiagio2}.
Under some conditions on the size of the constants $\nu,\mu,\lambda$,
a distinguished extension is selected
by means of a Hardy-type inequality and a quadratic
form approach. Nevertheless, in the particular case of the \emph{anomalous magnetic potential}
$\V(x)=\pm i \alpha \cdot \hx \beta\abs{x}^{-1}$
such criteria do not appear to be strong enough to select any extension, see \cite[Remark 1.10]{coulombbiagio} and \cite[Remark 1.6]{coulombbiagio2}.

This paper is organised as follows: in \Cref{sec:hardy} we prove
\Cref{thm:hardy}, in \Cref{sec:birman} we prove \Cref{thm:birman.schwinger}
and we prove \Cref{thm:rigidity} in \Cref{sec:rigidity};
finally, in \Cref{sec:appendix} we recall the partial wave
decomposition and related properties.

\subsection*{Acknowledgements}
This research is supported by
ERCEA Advanced Grant 2014 669689 - HADE, by the MINECO project MTM2014-53850-P, by
Basque Government project IT-641-13 and also by the Basque Government through the BERC
2014-2017 program and by Spanish Ministry of Economy and Competitiveness MINECO: BCAM
Severo Ochoa excellence accreditation SEV-2013-0323. The first author
also acknowledges the Istituto Italiano di Alta Matematica ``F.~Severi'' and
 the Czech Science Foundation 
(GA\v{C}R) within the project 17-01706S.
\section{Hardy-type inequalities for the Dirac Operator}
\label{sec:hardy}

We use the following abstract result.
\begin{lemma}\label{lem:abstract.lem}
  Let $\mathcal{S}, \mathcal{A}$ be respectively a symmetric and an anti-symmetric
operator on a complex Hilbert space $\mathcal{H}$. Then the following holds:
\[
  2\Re \langle
   \mathcal{A} u, 
  \mathcal{S} u
  \rangle_{\H}
  =
  \langle [\mathcal{S},\mathcal{A}] u,u\rangle_{\H},
\]
where $[\mathcal{S},\mathcal{A}]:=\mathcal{SA}-\mathcal{AS}$ is the 
commutator of the operators $\mathcal{S}$ and $\mathcal{A}$.
\end{lemma}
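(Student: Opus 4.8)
The statement is purely algebraic, so the plan is simply to expand the real part and to push each operator across the inner product using its (anti)symmetry. First I would use that $z+\bar z=2\Re z$ together with the Hermitian symmetry of $\langle\cdot,\cdot\rangle_\H$, namely $\overline{\langle \mathcal{A}u,\mathcal{S}u\rangle_\H}=\langle \mathcal{S}u,\mathcal{A}u\rangle_\H$, to write
\[
2\Re\langle \mathcal{A}u,\mathcal{S}u\rangle_\H=\langle \mathcal{A}u,\mathcal{S}u\rangle_\H+\langle \mathcal{S}u,\mathcal{A}u\rangle_\H .
\]
Then in the first summand I would move $\mathcal{S}$ onto the second argument via symmetry, $\langle \mathcal{A}u,\mathcal{S}u\rangle_\H=\langle \mathcal{S}\mathcal{A}u,u\rangle_\H$, and in the second summand I would move $\mathcal{A}$ via anti-symmetry, $\langle \mathcal{S}u,\mathcal{A}u\rangle_\H=-\langle \mathcal{A}\mathcal{S}u,u\rangle_\H$. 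Adding the two identities gives
\[
2\Re\langle \mathcal{A}u,\mathcal{S}u\rangle_\H=\langle \mathcal{S}\mathcal{A}u,u\rangle_\H-\langle \mathcal{A}\mathcal{S}u,u\rangle_\H=\langle (\mathcal{S}\mathcal{A}-\mathcal{A}\mathcal{S})u,u\rangle_\H=\langle [\mathcal{S},\mathcal{A}]u,u\rangle_\H ,
\]
which is exactly the asserted identity.

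The only point that requires a word of care — and it is not really an obstacle — is the domain: the manipulations implicitly assume that $u$ lies in a common subspace on which $\mathcal{S}$, $\mathcal{A}$, $\mathcal{S}\mathcal{A}$ and $\mathcal{A}\mathcal{S}$ all act, so that every pairing above is finite and the ``move the operator to the other slot'' steps are legitimate. In all the applications in this paper $\mathcal{S}$ and $\mathcal{A}$ are (matrix-valued) differential or multiplication operators and $u$ is a smooth compactly supported spinor, for which this is automatic. As a consistency check one may note that $[\mathcal{S},\mathcal{A}]$ is itself symmetric, since $(\mathcal{S}\mathcal{A})^{*}=\mathcal{A}^{*}\mathcal{S}^{*}=-\mathcal{A}\mathcal{S}$ and likewise $(\mathcal{A}\mathcal{S})^{*}=-\mathcal{S}\mathcal{A}$, so $\langle [\mathcal{S},\mathcal{A}]u,u\rangle_\H$ is indeed real, in agreement with the left-hand side. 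Thus the proof is a two-line computation and I do not anticipate any genuine difficulty.
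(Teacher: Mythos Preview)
Your proof is correct and is essentially the same two-line computation as the paper's own proof: expand $2\Re$ as the sum of the inner product and its conjugate, then use symmetry of $\mathcal{S}$ on the first summand and anti-symmetry of $\mathcal{A}$ on the second to obtain $\langle \mathcal{S}\mathcal{A}u,u\rangle_\H-\langle \mathcal{A}\mathcal{S}u,u\rangle_\H$. Your additional remarks on domains and on the symmetry of $[\mathcal{S},\mathcal{A}]$ are fine and go slightly beyond what the paper records.
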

\begin{proof}
  The proof is a simple computation:
  \[
      2\Re \langle \mathcal{A} u,
       \mathcal{S} u
       \rangle_\H
       =
      \langle \mathcal{A} u,
       \mathcal{S} u
       \rangle_\H
       +
      \overline{\langle \mathcal{A} u,
       \mathcal{S} u
       \rangle_\H}
     = \langle  \mathcal{S}\mathcal{A} u,
       u
       \rangle_\H
       -
       \langle  \mathcal{A}\mathcal{S} u,
       u
       \rangle_\H.
      \qedhere
  \]
\end{proof}

\begin{proof}[Proof of \Cref{thm:hardy}]
Let us firstly assume that $\psi \in \mathcal S(\Rt)^4$.
The proof descends immediately from the explicit computation of the following square:
\begin{equation}\label{eq:proof.hardy0}
\begin{split}
0&\leq \int_{\Rt}
  \abs{x}
\bigg\lvert(-i \alpha \cdot \nabla + m \beta - a)\psi
        - i \alpha \cdot \hx 
        \Big(1- \frac{a}{m} \beta \Big)(1+ 2\SL)\frac{\psi}{|x|}\bigg\rvert^2 \,dx
\\
&=
\int_{\Rt}
  \abs{x}
  \abs*{(-i \alpha \cdot \nabla + m \beta - a)\psi}^2 \,dx
-  \frac{m^2-a^2}{m^2} \int_{\R^3}
    \frac{\abs{(1+2\SL)\psi}^2}{\abs{x}}\,dx.
\end{split}
\end{equation}
Thanks to the fact that
\begin{equation}\label{eq:proof.hardy1}
\begin{split}
\int_{\Rt}\abs{x}
\bigg\lvert(-i \alpha \cdot \nabla + &m \beta - a)\psi
        - i \alpha \cdot \hx 
        \Big(1- \frac{a}{m} \beta \Big)(1+ 2\SL)\frac{\psi}{|x|}\bigg\rvert^2 \,dx
\\
  = &
  \int_{\R^3} \abs{x} \abs{(-i \alpha \cdot \nabla +m \beta - a)\psi}^2 \,dx
  +
  \int_{\R^3}|x|\abs*{\Big(1- \frac{a}{m}\beta\Big) 
    (1+2\SL)\frac{\psi}{\abs{x}}}^2 \,dx \\
  & -2\Re\int_{\R^3}
  (-i\alpha\cdot\nabla + m\beta - a)\psi
  \cdot
  \overline{i\alpha\cdot\hx \Big(1- \frac{a}{m}\beta\Big) (1+2\SL)\psi}
  \,dx,
\end{split}
\end{equation}
to prove \eqref{eq:proof.hardy0} it is enough to prove that
\begin{equation}\label{eq:proof.hardy.toprove}
\begin{split}
-2\Re\int_{\R^3}
  (-&i\alpha\cdot\nabla + m\beta - a)\psi
  \cdot
  \overline{i\alpha\cdot\hx \Big(1- \frac{a}{m}\beta\Big) (1+2\SL)\psi}
  \,dx  \\
&+ \int_{\R^3}|x|
 \bigg\lvert
 \Big(1- \frac{a}{m}\beta\Big) 
    (1+2\SL)\frac{\psi}{\abs{x}}
    \bigg\rvert^2 \,dx =
  -  \frac{m^2-a^2}{m^2} \int_{\R^3}
    \frac{\abs{(1+2\SL)\psi}^2}{\abs{x}}\,dx.
    \end{split}
\end{equation}

Let us firstly prove that
\begin{equation}\label{eq:doppioprod=II}
\begin{split}
 -2\Re\int_{\R^3}
  (-i\alpha\cdot\nabla + m\beta - a)\psi
  \cdot
  \overline{i\alpha\cdot\hx \Big(1- \frac{a}{m}\beta\Big) (1+2\SL)\psi}
  \,dx   \\
 =
 - 2\Re\int_{\R^3}(1+2\SL)\frac{\psi}{|x|}
  \cdot
  \overline{ \Big(1- \frac{a}{m}\beta\Big) (1+2\SL)\psi}
  \,dx.
 \end{split}
\end{equation}
With an explicit computation (see for example \cite[Equation 4.102]{thaller}) we get that
\begin{equation}
\label{eq:alpha.grad.polar}
  -i\alpha\cdot \nabla = 
  - i \alpha \cdot \hx \left (\left(\partial_r + \frac{1}{\abs{x}}\right)\I_4
    - \frac{1}{|x|}(1+ 2\SL)\right),
  \end{equation}
  and so the last term in \eqref{eq:proof.hardy1} can be expanded as follows:
\[
  \begin{split}
   -2\Re\int_{\R^3}
  (-i\alpha&\cdot\nabla + m\beta - a)\psi
  \cdot
  \overline{i\alpha\cdot\hx \Big(1- \frac{a}{m}\beta\Big) (1+2\SL)\psi}
  \,dx   
  \\
      =\,&
   2\Re\int_{\R^3}
  \Big(\partial_r + \frac{1}{\abs{x}}\Big) \psi
  \cdot
  \overline{ \Big(1- \frac{a}{m}\beta\Big) (1+2\SL)\psi}
  \,dx 
  \\
  & 
  - 2\Re\int_{\R^3}
   (1+2\SL)\frac{\psi}{\abs{x}}
  \cdot
  \overline{ \Big(1- \frac{a}{m}\beta\Big) (1+2\SL)\psi}
  \,dx 
  \\
  &  -2\Re\int_{\R^3}
  ( m\beta - a)\psi
  \cdot
  \overline{i\alpha\cdot\hx \Big(1- \frac{a}{m}\beta\Big)  (1+2\SL)\psi}
  \,dx
  \\
  =:& \,  I + II + III.
  \end{split}
\]
We show that $I=III=0$. 

Indeed,
the operator $\left(\partial_r + \frac{1}{|x|}\right)$ is skew-symmetric,
and the operator
$\Big(1+ \frac{a}{m}\beta\Big) (1+2\SL)$ is symmetric, since $\beta$ and $\SL$ are symmetric operators and they commute. 
Moreover,
\[
\Big[\partial_r + \frac{1}{\abs{x}}, \Big(1+ \frac{a}{m}\beta\Big) (1+2\SL)\Big]=0.
\]
So we can conclude that $I=0$, thanks to \Cref{lem:abstract.lem}.

Let us focus on $III$. Since $\beta^2=\I_4$, and $-i\alpha\cdot\hx$ and $\beta$ anti-commute, we rewrite
\[
  \begin{split}
    III = &-2\Re\int_{\R^3}
  ( m\beta - a)\psi
  \cdot
  \overline{i\alpha\cdot\hx \Big(1- \frac{a}{m}\beta\Big)  (1+2\SL)\psi}\,dx\\
  =& 
    \frac{1}{m} 2\Re\int_{\R^3}
    ( m\beta - a)\psi
    \cdot
    \overline{ (m\beta + a)   i\alpha\cdot\hx  \beta(1+2\SL)\psi}
    \,dx
    \\
    = &
    \frac{m^2 - a^2}{m} 2\Re\int_{\R^3}
    \psi \cdot
    \overline{ i\alpha\cdot\hx  \beta(1+2\SL)\psi}\,dx,
  \end{split}
\]
where, in the last equality, we used the fact that $(m\beta+a)$ is symmetric.
Since the operator $\beta(1+2\SL)$ is symmetric, the operator $i\alpha\cdot\hx$ is skew-symmetric and they anti-commute (see \cite[Equation 4.108]{thaller}), we have that the operator $ i\alpha\cdot\hx  \beta(1+2\SL)$ is skew-symmetric.
Finally, the identity operator is symmetric, and it trivially commutes with $i\alpha\cdot\hx  \beta(1+2\SL)$. Thus,
\Cref{lem:abstract.lem} let us conclude that $III=0$, and so \eqref{eq:doppioprod=II} is proved.

Finally, thanks to \eqref{eq:doppioprod=II} we get that
\[
\begin{split}
   \int_{\R^3} 
    |x|
    \Big\lvert&\Big(1- \frac{a}{m}\beta\Big) (1+2\SL)\frac{\psi}{|x|}\Big\rvert^2 \,dx
    - 2\Re\int_{\R^3}
  (1+2\SL)\frac{\psi}{\abs{x}}
  \cdot
  \overline{ \Big(1- \frac{a}{m}\beta\Big) (1+2\SL)\psi}
  \,dx 
     \\
&=-
     \Re \int_{\R^3}
    \Big(1 + \frac{a}{m}\beta\Big) (1+2\SL)\psi \cdot
    \overline{\Big(1- \frac{a}{m}\beta\Big)(1+2\SL) \frac{\psi}{\abs{x}}}\,dx
    \\
&=-  \frac{m^2-a^2}{m^2} \int_{\R^3}
    \frac{\abs{(1+2\SL)\psi}^2}{\abs{x}}\,dx.
  \end{split}
\]
and so \eqref{eq:proof.hardy.toprove} is proved. 
Finally we get \eqref{eq:hardy.con.a} combining \eqref{eq:proof.hardy0} and \eqref{eq:1<(1+SL)} .

Let us assume now that $\psi$ is a distribution verifying \eqref{eq:(H-a)psi.in.L^2|x|}.
Then, there exists a sequence $\seq{\varphi_n}_n\subset C^\infty_c(\Rt)^4$ such that 
\[
\varphi_n\to (H_0-a)\psi\quad \text{in}\ L^2(|x|)^4.
\]
The fundamental solution of $(H_0-a)$ is given by
\[
\phi^a (x):=\frac{e^{-\sqrt{m^2-a^2}|x|}}{4\pi|x|}\left(a+m\beta +\Big(1+\sqrt{m^2-a^2}|x|\Big)\,i\alpha\cdot\frac{x}{|x|^2}\right)\quad \text{for }x\in\Rt\setminus\{0\},
\]
Since $\phi^a$ has exponential decay at infinity, we get that $\psi_n:=\phi^a*\varphi_n\in \mathcal{S}(\Rt)^4$, so it verifies
\begin{equation}\label{eq:hardy.con.a.psi_n}
\int_{\Rt}\abs*{\left(H_0-a\right)\psi_n}^2|x|\,dx\geq
\tfrac{m^2-a^2}{m^2}\int_{\Rt}\frac{|(1+2\S\cdot L)\psi_n|^2}{|x|}\,dx\geq
\tfrac{m^2-a^2}{m^2}\int_{\Rt}\frac{|\psi_n|^2}{|x|}\,dx.
\end{equation}
By definition, $(H_0-a)\psi_n=\varphi_n$ and so we have that
\begin{equation}\label{eq:conv.(H-a)psi}
(H_0-a) \psi_n \to (H_0-a) \psi \quad \text{in}\ L^2(|x|)^4.
\end{equation}
Combining \eqref{eq:conv.(H-a)psi} and \eqref{eq:hardy.con.a.psi_n}, we deduce that both $\seq{(1+2\S\cdot L)\psi_n}_n$ and $\seq{\psi_n}_n$ are Cauchy sequences of $L^2(|x|^{-1})^4$. So, there exist $\eta,\theta\in L^2(|x|^{-1})^4$ such that
\begin{gather}
\label{eq:conv.eta}
\psi_n \to \eta \quad \text{in}\ L^2(|x|^{-1})^4,\\
\label{eq:conv.theta}
(1+2\S\cdot L)\psi_n\to \theta\quad \text{in}\ L^2(|x|^{-1})^4.
\end{gather}
Taking the limit on $n$ in \eqref{eq:hardy.con.a.psi_n}, we have that
\begin{equation}\label{eq:hardy.con.a.limit}
\int_{\Rt}\abs*{\left(H_0-a\right)\psi}^2|x|\,dx\geq
\tfrac{m^2-a^2}{m^2}\int_{\Rt}\frac{|\theta|^2}{|x|}\,dx\geq
\tfrac{m^2-a^2}{m^2}\int_{\Rt}\frac{|\eta|^2}{|x|}\,dx.
\end{equation}
Thanks to \eqref{eq:conv.eta} we deduce that, in the sense of distributions, the following hold
\begin{gather}
\label{eq:conv.(H-a)psi.dist}
(H_0-a)\psi_n \to (H_0-a)\eta,\\
\label{eq:conv.theta.dist}
(1+2\S\cdot L)\psi_n\to (1+2\S\cdot L)\eta.
\end{gather}
Thus, combining \eqref{eq:conv.(H-a)psi} with \eqref{eq:conv.(H-a)psi.dist} and \eqref{eq:conv.theta} with \eqref{eq:conv.theta.dist}, we have that
\begin{gather}
\label{eq:(H-a)psi=(H-a)eta}
(H_0-a) \psi= (H_0-a)\eta,\\
\label{eq:(1+SL)eta=theta}
(1+2\SL)\eta=\theta.
\end{gather}
Let us denote with $\langle \cdot , \cdot \rangle_{\D',\D}$ the usual pairing between a distribution and a test function. Then, for any $\varphi\in C^\infty_c(\Rt)^4$ we have that
\[
\langle \psi,\varphi\rangle_{\D',\D}=
\int_{\Rt} (H_0-a)\psi\cdot\overline{\phi^a*\varphi}\,dx=
\int_{\Rt} (H_0-a)\eta\cdot \overline{\phi^a*\varphi}\,dx=
\langle \eta,\varphi\rangle_{\D',\D},
\]
where we used \eqref{eq:(H-a)psi=(H-a)eta} in the third equality. 
For this reason, we can conclude that
\begin{equation}\label{eq:psi=eta}
\psi=\eta\in L^2(|x|^{-1})^4,
\end{equation}
and thanks to \eqref{eq:(1+SL)eta=theta}
\begin{equation}\label{eq:(1+2SL)psi=theta}
(1+2\SL)\psi=(1+2\SL)\eta=\theta.
\end{equation}
Finally, combining \eqref{eq:hardy.con.a.limit}, \eqref{eq:psi=eta}, and \eqref{eq:(1+2SL)psi=theta} we can conclude that $\psi$ verifies \eqref{eq:hardy.con.a}.
In particular, by a density argument, we get that $\psi$ verifies \eqref{eq:proof.hardy0}.

Let us finally assume that $\psi$ is an attainer of \eqref{eq:hardy.con.a}, that is
\begin{equation}\label{eq:hardy.con.a.attainer}
\int_{\Rt}\abs*{\left(-i\alpha\cdot\nabla + m\beta-a\right)\psi}^2|x|\,dx=
\tfrac{m^2-a^2}{m^2}\int_{\Rt}\frac{|(1+2\S\cdot L)\psi|^2}{|x|}\,dx=
\tfrac{m^2-a^2}{m^2}\int_{\Rt}\frac{|\psi|^2}{|x|}\,dx.
\end{equation}
We can decompose $\psi$ as in \eqref{eq:dec.armonic}, that is
\[
\psi(x)=
  \sum_{j,k_j,m_j} 
\frac{1}{r}\left(
f^+_{m_j,k_j}(r)\Phi^+_{m_j,k_j}(\hx)+
f^-_{m_j,k_j}(r)\Phi^-_{m_j,k_j}(\hx)\right).
\]
From the second equality of \eqref{eq:hardy.con.a.attainer}, and thanks to \eqref{eq:(1+SL)radial}, we directly have $f^\pm_{m_j,k_j}=0$ for $k_j\neq\pm 1$, or equivalently for $j\neq 1/2$. 
Let us focus on the first equality of \eqref{eq:hardy.con.a.attainer}. 
Thanks to \eqref{eq:proof.hardy0}, we get that
\[
  0 = 
  \int_{\R^3}
  \abs{x}
  \abs*{(-i \alpha \cdot \nabla + m \beta - a)\psi
        - i \alpha \cdot \hx 
        \Big(1- \frac{a}{m} \beta \Big)(1+ 2\SL)\frac{\psi}{\abs{x}}}^2 \,dx
\]
and so, 
\[
(-i \alpha \cdot \nabla + m \beta - a)\psi
        - i \alpha \cdot \hx 
        \Big(1- \frac{a}{m} \beta \Big)(1+ 2\SL)\frac{\psi}{\abs{x}}=0.
\]
Multiplying both therms by $i\alpha\cdot\hx$ and using \eqref{eq:alpha.grad.polar} we get that
\begin{equation}\label{eq:quadrato.no.rad}
\left(\partial_r+\frac{1}{|x|}+i\alpha\cdot\hx(m\beta-a)\right)\psi-\frac{a}{m}\beta(1+2\SL)\frac{\psi}{|x|}=0.
\end{equation}
The action of all the operators appearing in \eqref{eq:quadrato.no.rad} leaves invariant the decomposition in partial wave subspaces.
Thanks to \eqref{eq:(1+2SL)=-kBeta}, we get that for $m_{1/2}=\pm 1/2$ and $k_{1/2}=\pm 1$ we have
\begin{equation}\label{eq:sistem.attainer}
\begin{pmatrix}
\partial_r+\dfrac{ak_{1/2}}{m r} & -(m+a)\\
-(m-a)& \partial_r+\dfrac{ak_{1/2}}{m r}
\end{pmatrix}\cdot
\begin{pmatrix}
f^+_{m_{1/2},k_{1/2}}\\
f^-_{m_{1/2},k_{1/2}}
\end{pmatrix}=0.
\end{equation}
The only solution of \eqref{eq:sistem.attainer} that is integrable at $+\infty$ is
\begin{equation}
\begin{pmatrix}
f^+_{m_{1/2},k_{1/2}}\\
f^-_{m_{1/2},k_{1/2}}
\end{pmatrix}=
\begin{pmatrix}
e^{-\sqrt{m^2-a^2}r} r ^{-a k_{1/2}/m}\\
-\sqrt{\frac{m+a}{m-a}} e^{-\sqrt{m^2-a^2}r} r ^{-a k_{1/2}/m}
\end{pmatrix}.
\end{equation}
Then $\left(f^+_{m_{1/2},k_{1/2}},f^-_{m_{1/2},k_{1/2}}\right)\in
L^2(0,+\infty)^2$ if and only if $ak_{1/2}\leq0$. So if $a > 0$,we
have to assume $k_{1/2}=-1$, and if $a < 0$ we have $k_{1/2}=1$.
Remembering that 
\begin{align*}
\Phi^+_{\frac12,-1}&= 
  \frac1{\sqrt{4\pi}}
  \begin{pmatrix}
    i
    \\
    0
    \\
    0
    \\
    0
  \end{pmatrix}, 
  &
  \Phi^+_{-\frac12,-1}&= 
  \frac1{\sqrt{4\pi}}
  \begin{pmatrix}
    0
    \\
    i
    \\
    0
    \\
    0
  \end{pmatrix}, 
  \\
  \Phi^-_{\frac12,-1}&= 
  \frac1{\sqrt{4\pi}}
  \begin{pmatrix}
    0
    \\
    0
    \\
    \sigma \cdot \hx \cdot 
    \begin{pmatrix}
      0 \\ 1
    \end{pmatrix}
  \end{pmatrix}, 
  &
  \Phi^-_{-\frac12,-1}&= 
  \frac1{\sqrt{4\pi}}
  \begin{pmatrix}
    0
    \\
    0
    \\
    \sigma \cdot \hx \cdot 
    \begin{pmatrix}
      0 \\ 1
    \end{pmatrix}
  \end{pmatrix}, 
\end{align*}
  we conclude the proof.
\end{proof}

\section{Birman--Schwinger Principle for the Dirac--Coulomb operator}
\label{sec:birman}
This section is devoted to the proof of \Cref{thm:birman.schwinger}. 

Let $a\in(-m,m)$, $a\in\sigma_d(H_D)$ and
$\psi\in\D(H_D)\setminus\seq{0}$ such that $(H_0+\V-a)\psi=0$. We have
that, in the sense of distributions,  
\begin{equation}\label{eq:H-a=-V}
(H_0-a)\psi=-\V\psi. 
\end{equation}
 Since $\psi\in \D(H_D)\subset\D(r^{-1/2})^4$, then $f:=\u\psi\in L^2(\Rt)^4$ and $\vv f\in \D(r^{1/2})^4$. 
Thanks to \eqref{eq:H0-a.inv} we can apply $(H_0-a)^{-1}$ to
\eqref{eq:H-a=-V},
getting
$\psi=-(H_0-a)^{-1}\vv f$,
that implies
\begin{equation*}
  f=\u\psi=-\u (H_0-a)^{-1}\vv f.
\end{equation*}

Let now $-1$ be an eigenvalue of $\u(H_0-a)^{-1}\vv$ and let $f\in L^2(\Rt)^4$ be an eigenfunction.
Setting $\psi=(H_0-a)^{-1}\vv f$, we directly get that $\psi\in\D(r^{-1/2})$.
Reasoning as above, we get that $H_D\psi=a\psi$, and so $\psi\in H_D$ and $\psi$ is an eigenfunction of the eigenvalue $a$.

Finally, we point out that the shown procedure ensures that the multiplicity of $a$ as an eigenvalue of $H_D$ coincides with the multiplicity of $-1$ as an eigenvalue of $\u (H_0-a)^{-1} \vv$, and this concludes the proof.

\section{Proof of \texorpdfstring{\Cref{thm:rigidity}}{Theorem 1.4}}
\label{sec:rigidity}
This section is devoted to the proofs of  \Cref{thm:rigidity} and
\Cref{cor:coulomb}.

\begin{proof}[Proof of \Cref{thm:rigidity}]
Let $a \in (-m,m)$ and assume that there exists
$\psi\in\D(H_D)\setminus\seq{0}$ such that $(H_D-a)\psi=0$. Then, in
the sense of distributions, we get that $(H_0-a)\psi=-\V\psi$. Since $\psi\in\D(H_D)\subset\D(r^{-1/2})^4$ and thanks to the fact that $\V$ verifies \eqref{eq:cond.V}, we get that
\[
\int_{\Rt}|x||(H_0-a)\psi|^2\,dx=\int_{\Rt}|x||\V\psi|^2\,dx\leq \nu^2\int_{\Rt}\frac{|\psi|^2}{|x|}\,dx<+\infty .
\]
Thanks to \Cref{thm:hardy} we get that $\psi$ verifies \eqref{eq:hardy.con.a}, and so
\begin{equation}\label{eq:rigi}
\begin{split}
\nu^2\int_{\Rt} \frac{|\psi|^2}{|x|}&\geq\int_{\Rt}|x||(H_0-a)\psi|^2\,dx\geq
\tfrac{m^2-a^2}{m^2}\int_{\Rt}\frac{|(1+2\SL)\psi|^2}{|x|}\,dx\\
&\geq
\tfrac{m^2-a^2}{m^2}\int_{\Rt}\frac{|\psi|^2}{|x|}\,dx,
\end{split}
\end{equation}
So, $\nu^2\geq \tfrac{m^2-a^2}{m^2}$, that directly implies
\emph{\ref{item:rigidity.i}}.

Let us prove \emph{\ref{item:rigidity.ii}}. Let us assume that
$a^2=m^2(1-\nu^2)$. Then, from \eqref{eq:rigi} we deduce that $\psi$
is an attainer of \eqref{eq:hardy.con.a}: thanks to
\Cref{thm:hardy}, this is equivalent to say that there exists $C\in\C^2$ such that $\psi=\psi^a_C$, with $\psi^a_C$ defined in \eqref{eq:psia}.
This directly implies that $\mu(a)\leq 2$. 
Finally, thanks to \eqref{eq:attainer.eigenvector} and \eqref{eq:attainer.eigenvector.meno} we get that $0=\left(H_0- \sgn (a) \frac{\nu}{|x|}-a\right)\psi^a_C=(H_0+\V-a)\psi^a_C$, so
$\V\psi^a_C=-\sign(a)\frac{\nu}{|x|}\psi^a_C$.

Let us now prove \emph{\ref{item:rigidity.iii}}. 
We assume that $a$ is positive, that is $a=m\sqrt{1-\nu^2}$,
since the same approach can be used when $a$ is negative.
Moreover, let us assume that, for any $C\in \C^2$,
\begin{equation}
\label{eq:Vpsia=-nupsia}
\V\psi^a_{C}=-\frac{\nu}{|x|} \psi^a_{C}.
\end{equation}

Since
\begin{equation}\label{eq:def.Nu}
\sqrt{\frac{m-a}{m+a}}=
\sqrt{\frac{1-\sqrt{1-\nu^2}}{1+\sqrt{1-\nu^2}}}=\Nu,
\end{equation}
we have that 
\begin{equation}
\label{eq:def.psia}
\psi^a_{C}=
\frac{e^{-\sqrt{m^2-a^2}|x|}}{|x|^{1-a/m}}
\begin{pmatrix}
\mathbb{I}_2&0\\
0& i\Nu\sigma\cdot\hx
\end{pmatrix}
\cdot
\begin{pmatrix}
C\\
C
\end{pmatrix},
\end{equation}
where, with abuse of notation, we are denoting with
$\begin{pmatrix}
C\\C
\end{pmatrix}$ the $4$--component column vector.

Thanks to \eqref{eq:def.psia}, multiplying both therms of \eqref{eq:Vpsia=-nupsia} by $\left(\frac{e^{-\sqrt{m^2-a^2}|x|}}{|x|^{1-a/m}}\right)^{-1}$, we get that
\begin{equation}\label{eq:V.matrix}
\left(\V(x)+\frac{\nu}{|x|}\I_4\right)\cdot\begin{pmatrix}
\mathbb{I}_2&0\\
0& i\Nu\sigma\cdot\hx
\end{pmatrix}
\cdot
\begin{pmatrix}
C\\
C
\end{pmatrix}=0.
\end{equation}
Since both $\V$ and $\frac{\nu}{|x|}\I_4$ are Hermitian matrices, we can write
\begin{equation}\label{eq:dec.V+nu}
\left(\V+\frac{\nu}{|x|}\I_4\right)=:
\begin{pmatrix}
\W_{1,1}&\W_{1,2}\\
\W_{1,2}^*&\W_{2,2}
\end{pmatrix},
\end{equation}
where $\W_{1,1}$ and $\W_{2,2}$ are $2\times 2$ Hermitian matrices, $\W_{1,2}$ is a $2\times 2$ complex valued matrix and $\W_{1,2}^*$ is its adjoint matrix.

Combining \eqref{eq:V.matrix} and \eqref{eq:dec.V+nu}, we get that 
\begin{equation}\label{eq:system.V.vector}
\begin{cases}
(\W_{1,1}+i\Nu\W_{1,2}\,\sigma\cdot\hx)\cdot C=0,\\
(\W_{1,2}^*+i\Nu\W_{2,2}\,\sigma\cdot{\hx})\cdot C=0.
\end{cases}
\end{equation}
Since \eqref{eq:system.V.vector} holds for any $C\in\C^2$, we deduce that
\begin{align}
\label{eq:system.V.1}
\W_{1,1}+i\Nu\W_{1,2}\,\sigma\cdot\hx=0,\\
\label{eq:system.V.2}
\W_{1,2}^*+i\Nu\W_{2,2}\,\sigma\cdot{\hx}=0.
\end{align}
Taking the adjoint of  \eqref{eq:system.V.2}, and thanks to the fact that both $\sigma\cdot\hx$ and $\W_{2,2}$ are Hermitian matrices, we get that
\begin{equation}\label{eq:W_12}
\W_{1,2}=i\Nu\sigma\cdot\hx \W_{2,2}.
\end{equation}
Combining \eqref{eq:system.V.1} and \eqref{eq:W_12} we get that
\[
\W_{1,1}=\Nu^2\sigma\cdot\hx\W_{2,2}\sigma\cdot\hx.
\]
Setting for convenience $\W^+:=\W_{2,2}$, we can conclude that \eqref{eq:V.matrix} is equivalent to
\begin{equation}
\V(x):=-\frac{\nu}{|x|}\I_4
+
\begin{pmatrix}
\Nu^2\sigma\cdot\hx\mathbf{W}^+(x)\sigma\cdot\hx&i\Nu\sigma\cdot\hx\mathbf{W}^+(x)\\
-i\Nu\mathbf{W}^+(x)\sigma\cdot\hx&\mathbf{W}^+(x)
\end{pmatrix}.
\end{equation}

Finally, thanks to \eqref{eq:cond.V} we determine additional
properties on the matrix $\W^+(x)$. 
For any $x\in\Rt\setminus\seq{0}$, there exists $\seq{e_1(x),e_2(x)}$, an orthonormal basis of $\C^2$ of eigenvectors of $\W^+(x)$,
that is $\W^+(x) e_j(x)=\lambda_j^+(x) e_j(x)$, for $j=1,2$, with $\lambda_j^+(x)\in \R$.
Set
\[
u_j(x):=
\tfrac{1}{N^2+1}\begin{pmatrix}
e_j(x)\\
i\Nu\sigma\cdot\hx e_j(x)
\end{pmatrix}\quad\text{and}\quad
v_j(x):=
\tfrac{1}{N^2+1}
\begin{pmatrix}
i\Nu\sigma\cdot\hx e_j(x)\\
e_j(x)
\end{pmatrix},
\quad\text{for}\ j=1,2.
\]
The family $\seq{u_1(x),u_2(x),v_1(x),v_2(x)}$ is an orthonormal basis of $\C^4$.
Thus, $|x||\V(x)|\leq\nu$ if and only if $|x||\V(x) u_j(x)|\leq \nu$ and $|x||\V(x) v_j(x)|\leq \nu$ for $j=1,2$.

We have that, for $j=1,2$
\begin{equation}\label{eq:V.uj.vj}
\begin{split}
\V(x) u_j(x)&=-\frac{\nu}{|x|} u_j(x),\\
\V(x) v_j(x)&= \left(-\frac{\nu}{|x|}+\lambda_j^+(x)(N^2+1)\right) v_j(x).
\end{split}
\end{equation}
Since $|u_j(x)|=|v_j(x)|=1$, from \eqref{eq:V.uj.vj} we deduce that that $|x||\V(x)|\leq \nu$ if and only if
\begin{equation}\label{eq:cond.autoval.proof}
\left(-\nu+|x|\lambda_j^+(x)(N^2+1)\right)^2\leq\nu^2,\quad\text{for}\ j=1,2.
\end{equation}
From \eqref{eq:cond.autoval.proof} and \eqref{eq:def.Nu} we deduce
\eqref{eq:cond.autoval}, concluding the proof.
\end{proof}
\begin{proof}[Proof of \Cref{cor:coulomb}]
From \emph{\ref{item:rigidity.ii}} in \Cref{thm:rigidity}
we have that $V(x)\psi_C^a=\mp\frac{\nu}{|x|} \psi_C^a$ for some $C \in \C^2$,
and this implies the thesis. 
\end{proof}

\appendix
\section{Partial wave subspaces}\label{sec:appendix}
In this appendix, we recall the \emph{partial wave subspaces} associated to the Dirac equation. We sketch here this topic, referring to \cite[Section 4.6]{thaller} for further details.

Let $Y^l_n$ be the spherical harmonics. They are defined for $n = 0, 1, 2, \dots$, and $l =-n,-n + 1,\dots , n,$ and they satisfy $\Delta_{\mathbb{S}^2} Y^l_n= n(n + 1)Y^l_n$, where $\Delta_{\mathbb{S}^2}$ denotes the usual spherical Laplacian. Moreover, $Y^l_n$ form a complete orthonormal set in $L^2(\mathbb{S}^2)$.
For $j = 1/2, 3/2, 5/2, \dots , $ and $m_j = -j,-j + 1, \dots , j$, set
\[
\begin{split}
\psi^{m_j}_{j-1/2}&:=
\frac{1}{\sqrt{2j}}
\left(\begin{array}{c}
\sqrt{j+m_j}\,Y^{m_j-1/2}_{j-1/2}\\
\sqrt{j-m_j}\,Y^{m_j+1/2}_{j-1/2}\\
\end{array}\right),
\\
\psi^{m_j}_{j+1/2}&:=\frac{1}{\sqrt{2j+2}}
\left(\begin{array}{c}
\sqrt{j+1-m_j}\,Y^{m_j-1/2}_{j+1/2}\\
-\sqrt{j+1+m_j}\,Y^{m_j+1/2}_{j+1/2}\\
\end{array}\right);
\end{split}
\]
then  $\psi^{m_j}_{j\pm1/2}$ form a complete orthonormal set in $L^2(\mathbb{S}^2)^2$.
For $k_j:=\pm(j+1/2)$ we set
\[
\Phi^+_{m_j,\pm(j+1/2)}:=
\left(\begin{array}{c}
i\,\psi^{m_j}_{j\pm1/2}\\
0
\end{array}\right),
\quad
\Phi^-_{m_j,\pm(j+1/2)}:=
\left(\begin{array}{c}
0\\
\psi^{m_j}_{j\mp1/2}
\end{array}\right).
\]
Then, the set $\seq{\Phi^+_{m_j,k_j},\Phi^-_{m_j,k_j}}_{j,k_j,m_j}$ is a 
complete orthonormal basis of $L^2(\mathbb{S}^2)^4$ and
\begin{equation}\label{eq:(1+2SL)=-kBeta}
  (1+2\SL)\Phi_{m_j,k_j}=-k_j\beta\Phi_{m_j,k_j},
\end{equation}
where the \emph{spin angular momentum operator} $\S$ and the \emph{orbital angular momentum} $L$ are defined as
\begin{equation}\label{eq:defn.spin}
\mathbf{S}=
\frac{1}{2}\left(
\begin{array}{cc}
\sigma & 0\\
0 & \sigma
\end{array}
\right)\quad
\text{and}\quad
L:=-ix\wedge \nabla.
\end{equation}
\verde{We define the following space:
\begin{equation}
\mathcal{H}_{m_j,k_j}:=
\seq*{ 
\frac{1}{r}\left(
f^+_{m_j,k_j}(r)\Phi^+_{m_j,k_j}(\hx)+
f^-_{m_j,k_j}(r)\Phi^-_{m_j,k_j}(\hx)\right)
\in L^2(\Rt)
\mid
f^\pm_{m_j,k_j}\in L^2(0,+\infty)}.
\end{equation}}
So, we can write
\begin{equation}\label{eq:dec.armonic}
\psi(x)=
  \sum_{j,k_j,m_j} 
\frac{1}{r}\left(
f^+_{m_j,k_j}(r)\Phi^+_{m_j,k_j}(\hx)+
f^-_{m_j,k_j}(r)\Phi^-_{m_j,k_j}(\hx)\right)
\end{equation}
and, by definition,
\[
\int_{\Rt}|\psi|^2\,dx=  
\sum_{j,k_j,m_j} 
\int_0^{+\infty}|f^+_{m_j,k_j}(r)|^2+
|f^-_{m_j,k_j}(r)|^2\,dr.
\]
Thanks to \cite[Equation 4.109]{thaller} and \eqref{eq:dec.armonic}, we have that
\begin{equation}\label{eq:(1+SL)radial}
\begin{split}
\int_{\Rt}
\frac{|\psi|^2}{|x|}\,dx
&=
  \sum_{j,k_j,m_j} 
\int_0^{+\infty}
\frac{1}{r}
\left(
|f^+_{m_j,k_j}(r)|^2+
|f^-_{m_j,k_j}(r)|^2\right)dr,
\\
\int_{\Rt}
\frac{|(1+2\SL)\psi|^2}{|x|}\,dx
&=
  \sum_{j,k_j,m_j} 
\int_0^{+\infty}
\frac{k_j^2}{r}
\left(
|f^+_{m_j,k_j}(r)|^2+
|f^-_{m_j,k_j}(r)|^2\right)dr.
\end{split}
\end{equation}
From \eqref{eq:(1+SL)radial}, we directly deduce that
\begin{equation}\label{eq:1<(1+SL)}
\int_{\Rt}
\frac{|\psi|^2}{|x|}\,dx
\leq
\int_{\Rt}
\frac{|(1+2\SL)\psi|^2}{|x|}\,dx,
\end{equation}
and that \eqref{eq:1<(1+SL)} is attained if and only if
$f_{m_j,k_j}^\pm=0$  for $k_j\neq\pm 1$, or equivalently $j\neq 1/2$.

\end{document}